\theoremstyle{plain}
\newtheorem{theorem}{Theorem}[section]
\newtheorem{lemma}[theorem]{Lemma}
\newtheorem{question}[theorem]{Question}
\theoremstyle{remark}
\theoremstyle{definition}
\newtheorem{remark}[theorem]{Remark}
\newtheorem{example}[theorem]{Example}
\newtheorem{conjecture}[theorem]{Conjecture}
\newtheorem{definition}[theorem]{Definition}
\theoremstyle{remark}
\newtheoremstyle{named}{}{}{\itshape}{}{\bfseries}{.}{.5em}{\thmnote{#3}}
\theoremstyle{named}
\begin{document}

\title[Co$\mathcal{CF}$ Groups and Demonstrative Embeddings]{Groups with Context-Free Co-Word Problem and Embeddings into Thompson's Group V} 
\author[Berns-Zieve]{Rose Berns-Zieve} 
 \address{Department of Mathematics \\
 Hamilton College \\
 Clinton, NY 13323}
 \email{rbernszi@hamilton.edu}
\author[Fry]{Dana Fry}
\address{Department of Mathematics and Statistics \\
Mount Holyoke College \\
South Hadley, MA 01075}
\email{fry22d@mtholyoke.edu}
\author[Gillings]{Johnny Gillings}
\address{Department of Mathematics \\
Morehouse College \\
Atlanta, GA 30314}
\email{j.gillingsjr@gmail.com}
\author[Hoganson]{Hannah Hoganson}
\address{Department of Mathematics \\
Miami University \\
Oxford, OH 45056}
\email{hoganshl@miamioh.edu}
\author[Mathews]{Heather Mathews}
\address{Department of Mathematics \\
Miami University \\
Oxford, OH 45056}
\email{mathewhm@miamioh.edu}

\begin{abstract}
Let $G$ be a finitely generated group, and let $\Sigma$ be a finite subset that generates $G$ as a monoid. The \emph{word problem of $G$ with respect to $\Sigma$} consists of all words in the free monoid $\Sigma^{\ast}$ that are equal to the identity in $G$. The \emph{co-word problem of $G$ with respect to $\Sigma$} is the complement in
$\Sigma^{\ast}$ of the word problem. We say that a group $G$ is \emph{co$\mathcal{CF}$} if its co-word problem with respect to some (equivalently, any) finite generating set $\Sigma$ is a context-free language.

We describe a generalized Thompson group $V_{(G, \theta)}$ for each finite group G and homomorphism $\theta$: $G \rightarrow G$. Our group is constructed using the cloning systems introduced by Witzel and Zaremsky. We prove that $V_{(G, \theta)}$ is co$\mathcal{CF}$ for any homomorphism $\theta$ and finite group G by constructing a pushdown automaton and showing that the co-word problem of $V_{(G, \theta)}$ is the cyclic shift of the language accepted by our automaton. 

A version of a conjecture due to Lehnert says that a group has context-free co-word problem exactly if it is a finitely generated subgroup of V. The groups $V_{(G,\theta)}$ where $\theta$ is not the identity homomorphism do not appear to have obvious embeddings into V, and may therefore be considered possible counterexamples to the conjecture.

Demonstrative subgroups of $V$, which were introduced by Bleak and Salazar-Diaz, can be used to construct embeddings of certain wreath products and amalgamated free products into $V$. We extend the class of known finitely generated demonstrative subgroups of V to include all virtually cyclic groups.
\end{abstract}

\keywords{context-free language, pushdown automaton, Thompson's groups}

\subjclass[2010]{20F10, 20E06}

\maketitle

\section{Introduction}
Let $G$ be a group and let $\Sigma \subseteq G$ be a finite set that generates $G$. The \emph{word problem of G with respect to the free monoid $\Sigma^*$} is the set of all words in $\Sigma^*$ that are equivalent to the the identity in G. The \emph{co-word problem of G with respect to $\Sigma^*$} is the complement of the word problem. Both the word problem and the co-word problem of $G$ are languages. The Chomsky Hierarchy \cite{Brookshear} states that the set of regular languages is a subset of context-free languages, the set of context-free languages is a subset of context sensitive languages, and the set of context sensitive languages is a subset of recursive languages. We will focus in particular on context-free languages. A language is \emph{context-free} if it is accepted by a pushdown automaton. If the co-word problem of G is a context-free language, then we say G is co$\mathcal{CF}$. This property does not depend on the choice of monoid generating set. The class of co$\mathcal{CF}$ groups was first studied by Holt, Rees, R\"{o}ver, and Thomas \cite{Holt}. They showed that the class is closed under taking finite direct products, taking restricted standard wreath products with virtually free top groups, and passing to finitely generated subgroups and finite index overgroups. 

One group of particular interest is Thompson's group $V$, which is an infinite but finitely presented simple group. Lehnert and Schweitzer demonstrate that Thompson's group $V$ is co$\mathcal{CF}$. This group is of interest to us because of the conjecture, formulated by Lehnert and revised by Bleak, Matucci, and Neunh\"{o}ffer \cite{Bleak2}, that any group with context-free co-word problem embeds in V, i.e., 

\begin{conjecture}\label{bigone}
Thompson's group $V$ is a universal co$\mathcal{CF}$ group.
\end{conjecture}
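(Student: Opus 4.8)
The plan is to prove the substantive direction of the asserted characterization: that every finitely generated group $G$ with context-free co-word problem embeds in $V$. The converse direction is already available — since Lehnert and Schweitzer proved $V$ is co$\mathcal{CF}$, and the class of co$\mathcal{CF}$ groups is closed under passing to finitely generated subgroups by Holt, Rees, R\"over, and Thomas \cite{Holt}, every finitely generated subgroup of $V$ is automatically co$\mathcal{CF}$. So Conjecture~\ref{bigone} reduces to producing, for an arbitrary co$\mathcal{CF}$ group, a faithful action on a Cantor set realized by finitely many prefix-exchange homeomorphisms.

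First I would try to read an embedding off a pushdown automaton directly. Fix a finite monoid generating set $\Sigma$ for $G$ and a pushdown automaton $M$ accepting the co-word problem of $G$. The stack alphabet of $M$ determines a rooted tree — the tree of stack contents — and each letter of $\Sigma$, via the transition function of $M$, induces a partial transformation of the ends of that tree. After replacing $M$ by a machine in a suitable normal form (for instance one whose moves are single-symbol pushes and pops, in the spirit of the Chomsky--Sch\"utzenberger analysis of context-free languages), the hope is that these transformations become genuine local prefix replacements, so that each generator of $G$ acts as an element of $V$ on the associated Cantor set, with faithfulness coming from the fact that $M$ decides triviality in $G$. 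This is essentially the mechanism behind Lehnert's group and its known embedding into $V$, so the content would be to carry it out uniformly for every co$\mathcal{CF}$ group.

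When this direct route fails — and the groups $V_{(G,\theta)}$ built in this paper via the Witzel--Zaremsky cloning systems are exactly the suspected obstruction when $\theta \neq \id$ — I would fall back on the demonstrative-subgroup technology of Bleak and Salazar-Diaz, which the present paper extends to all virtually cyclic groups. The idea is to exhibit a putative counterexample as assembled from pieces (restricted wreath products, amalgamated free products, HNN extensions) over demonstrative subgroups that already embed in $V$, and then to build a global embedding by the ping-pong/prefix-replacement machinery that demonstrativeness supplies. Making this work for $V_{(G,\theta)}$ would require showing that its cloning-system presentation is compatible with a demonstrative action and that the twist by $\theta$ can be absorbed into such a decomposition.

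The main obstacle is precisely the one the abstract flags: there is at present no robust dictionary translating arbitrary context-free stack behaviour into the dynamics of prefix-exchange maps, and the groups $V_{(G,\theta)}$ with $\theta$ not the identity are not known to split as any iterated construction over demonstrative subgroups, nor to act faithfully on a Cantor set by finitely many prefix exchanges. A proof of the conjecture would therefore need either a new, uniform passage from a pushdown automaton to an embedding into $V$ that handles the $\theta$-twisting, or an explicit construction of an embedding $V_{(G,\theta)}\hookrightarrow V$ exhibiting these groups as genuine subgroups rather than counterexamples. The program above reduces Conjecture~\ref{bigone} to these two concrete problems, but as matters stand it does not resolve it.
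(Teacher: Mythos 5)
This statement is a \emph{conjecture} (Lehnert's conjecture, as revised by Bleak, Matucci, and Neunh\"offer); the paper does not prove it, and in fact the paper's main contribution on this point is to construct the groups $V_{(G,\theta)}$ as \emph{potential counterexamples} when $\theta$ is not the identity. Your proposal, to its credit, says explicitly in its last sentence that it does not resolve the conjecture — and that is the correct assessment. What you have written is a research program, not a proof, so there is nothing here that can be checked against a proof in the paper.

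The concrete gaps in your program are the ones you yourself flag, but it is worth being precise about why each step fails as stated. First, the reduction in your opening paragraph is fine (the converse direction does follow from Lehnert--Schweitzer plus the closure of co$\mathcal{CF}$ groups under passage to finitely generated subgroups \cite{Holt}), but the ``substantive direction'' is the entire content of the conjecture and your first mechanism for it does not work: a letter of $\Sigma$ acting via the transition relation of a nondeterministic PDA does not induce a well-defined map on stack contents at all, let alone a prefix-exchange homeomorphism of the ends of the stack tree — nondeterminism, $\epsilon$-transitions, and the dependence on the control state all obstruct this, and no normal form for PDAs is known to remove these obstructions uniformly. Second, the demonstrative-subgroup route requires that the target group decompose as a wreath product, free product, or similar construction over pieces already known to embed demonstratively; the groups $V_{(G,\theta)}$ with $\theta \neq \id_G$ are not known to admit any such decomposition, and the paper gives no faithful action of them on a Cantor set (only the non-injective quotient map $\Phi$ to $V$). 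So both branches of your program terminate in exactly the open problems the paper identifies. A fair summary is that your proposal correctly locates where the difficulty lives but supplies no new argument; the conjecture remains open, and this paper's $V_{(G,\theta)}$ construction is evidence that it may be false.
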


In this paper we prove two classes of results, one related to embeddings into V, and the other offering a potential counterexample to Conjecture \ref{bigone}. 

Bleak and Salazar-Diaz \cite{Bleak1} define the class of demonstrative subgroups of $V$ and use this class to produce embeddings of free products and wreath products into $V$. They also show that the class of groups that embed into $V$ is closed under taking finite index overgroups. Their proof of the latter fact appeals to results of Kaloujnine and Krasner \cite{KK}. Here, we use induced actions to give a direct proof. Our argument shows, moreover, that if the original embedding is demonstrative, then so is the embedding of the finite index overgroup. 


A theorem of \cite{Bleak1} says that $\mathbb{Z}$ is a demonstrative subgroup of $V$. The results sketched above prove that 
all virtually cyclic groups are demonstrative, and it appears that these are the only known finitely generated demonstrative subgroups. If $V$ is a universal co$\mathcal{CF}$ group, then it should be possible, by the results of Holt, Rees, R\"{o}ver, and Thomas \cite{Holt}, to find an embedding of $G \wr F_2$ into $V$, where $G$ is co$\mathcal{CF}$ and $F_{2}$ is the free group on two generators. The easiest way to find such an embedding would be to show that $F_2$ has a demonstrative embedding into $V$. We are thus led to ask:

\begin{question} Does there exist a demonstrative embedding of $F_{2}$ into $V$?
\end{question}

Our class of potential counterexamples to Conjecture \ref{bigone} comes from the cloning systems of Witzel and Zaremsky \cite{Witza}. We look at a specific group $V_{(G, \theta)}$ that arises from their family of groups equipped with a cloning system. We define a surjective homomorphism $\Phi$ from $V_{(G, \theta)} \rightarrow V$, which implies that $V_{(G, \theta)}$ acts on the Cantor set. However, by our construction, $V_{(G, \theta)}$ seems to have no obvious faithful actions on the Cantor set when $\theta$ is not the identity homomorphism.

In our main result, we prove that $V_{(G, \theta)}$ is co$\mathcal{CF}$ for all pairs of $\theta$ and finite G. We begin by detailing a construction of a pushdown automaton and we show that the co-word problem is equivalent to the cyclic shift of the language accepted by the automaton, therefore proving that the co-word problem is context-free. 

We briefly outline the paper. Section \ref{background} provides the necessary background for the reader to understand the concepts discussed in the two following sections. In Section \ref{demonstrative}, we give our proofs and examples of all ideas related to demonstrative subgroups. Finally, in Section \ref{maintheorem} we prove the main result of our paper. 

\section{Background} \label{background}

\subsection{Pushdown Automata}

\begin{definition}
Let $\Sigma$ be a finite set, called an \emph{alphabet}. We call elements of the alphabet \emph{symbols}. The \emph{free monoid} on $\Sigma$, denoted $\Sigma^*$, is the set of all finite strings of symbols from $\Sigma$. This includes the empty string, which we denote $\epsilon$. The operation is concatenation. An element of the free monoid is a \emph{word}. A subset of the free monoid is a \emph{language}.
\end{definition}
\begin{example}
Let $\Sigma = \{0,1\}$. The free monoid $\Sigma^*$ contains all finite concatenations of $0$ and $1$ in any order. An example word is $01101$. 
\end{example}
\begin{definition}
Let G be a group. A \emph{finite monoid generating set} is a finite alphabet $\Sigma$ with a surjective monoid homomorphism $\Phi: \Sigma^* \rightarrow G.$ 
The \emph{word problem} of a group G (with respect to $\Sigma$), denoted $WP_\Sigma(G)$, is the kernel of $\Phi$. The complement of the word problem is the \emph{co-word problem}, denoted $CoWP_\Sigma(G)$.
\end{definition}
\begin{definition} 
Let $\Sigma, \Gamma$ be alphabets and let $\#$ be an element of $\Gamma$. A \emph{pushdown automaton} \cite{Brookshear} with stack alphabet $\Gamma$ and input alphabet $\Sigma$ is defined as a directed graph with a finite set of vertices $V$, a finite set of \emph{transitions} (directed edges) $\delta$, an initial state $v_0 \in V$, and a set of \emph{terminal states} $T \subseteq V$. 

\begin{figure}[b]
\center{
\includegraphics[width=250pt]{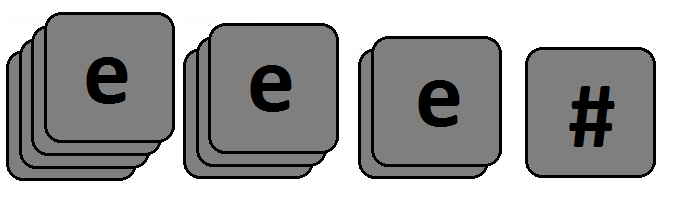}
\caption{The stack of a PDA. As an element is read from the top, the next element appears as the new top if nothing is written to the stack.}
\label{fig:stack}}
\end{figure}
A transition is labeled by an ordered triplet $(w_1,w_2,w_3) \in (\Sigma \cup \{\epsilon \}) \times \Gamma^* \times \Gamma^*$. When \emph{following} a transition, the pushdown automaton (PDA) reads and deletes $w_1$ from its input tape, reads and deletes $w_2$ from its memory stack (shortened to stack for the duration of this paper), and writes $w_3$ on its stack. If $w_1, w_2, \text{ or } w_3 \text{ equals } \epsilon,$ the automaton does not execute the action associated with that coordinate. 
We only consider \emph{generalized} PDA, which (as described above) can add and delete multiple letters on its stack at a time.
A PDA accepts languages either by terminal state, or by empty stack. This must be specified upon creation of the automaton. See Definitions \ref{def:success} and \ref{def:accept}.
\end{definition}
\begin{definition}(\cite{Farley}, Definition 2.6)
Let P be a pushdown automaton. 
We describe a class of directed paths in P, called the \emph{valid paths}, by induction on length. 
The path of length 0 starting at the initial vertex $v_0 \in P$ is valid; 
its stack value is $\# \in \Gamma^*$. 
Let $t_1 \dots t_n (n \geq 0)$ be a valid path in P, where $t_1$ is the transition crossed first. 
Let $t_{n+1}$ be a transition whose initial vertex is the terminal vertex of $t_n$; 
we suppose that the label of $t_{n+1}$ is $(s,w_1,w_2)$. The path $t_1 \dots t_n t_{n+1}$ is also valid, provided that the stack value of $t_1 \dots t_n$ has $w_1$ as a prefix; 
that is, if the stack value of $t_1 \dots t_n$ has the form $w_1 w^{\prime} \in \Gamma^*$ for some $w^{\prime} \in \Gamma^*$. We say that the edge $t_{n+1}$ is a \emph{valid transition}.
The stack value of $t_1 \dots t_n t_{n+1}$ is then $w_2 w^{\prime}$. We let \emph{val(p)} denote the stack value of a valid path $p$.

The \emph{label} of a valid path $t_1 \dots t_n$ is $s_n \dots s_1$, where $s_i$ is the first coordinate of the label for $t_i$ (an element of $\Sigma$, or the empty string). The label of a valid path p will be denoted $\ell(p)$.
\end{definition}
\begin{definition}\label{def:success}
Let $p$ be a valid path of a pushdown automaton P. We say that $p$ is a \emph{successful path} when
\begin{enumerate}
\item $val(p) = \epsilon$ if P accepts by empty stack, or
\item The terminal vertex of $p$ is in T if P accepts by terminal state. 
\end{enumerate}
\end{definition}
\begin{definition}\label{def:accept}
Let P be a PDA. The language \emph{accepted by} P, denoted $\mathcal{L}_P$, is
\item $\mathcal{L}_P = \{w \in \Sigma ^* \mid w = \ell(p) \text{ for some successful path p}\}$.
\end{definition}
\begin{definition}
A subset of the free monoid $\Sigma ^*$ is called a \emph{(non-deterministic) context-free language} if it is $\mathcal{L}_P$ for some pushdown automaton P.
\end{definition}
\noindent Let P be a PDA. We operate P in the following way:
\begin{enumerate}
\item A word $\hat{w}$ is placed on the input tape and read symbol by symbol. By our convention, P reads the input tape from right to left.
\item Next, P follows valid transitions non-deterministically (i.e., by choosing them) until $\hat{w} = \ell(p)$ for some successful path $p$. Throughout this process, the leftmost symbol on the stack is considered to be in the top position.
\item If some successful path $p$ exists, P accepts $\hat{w}$.
\end{enumerate}

\subsection{Thompson's Group V}


\begin{definition}
Let $X = \{0,1\}$ and consider $X^*$. As in \cite{Bleak1}, we define an infinite rooted tree, $\mathcal{T}_2$, as follows:

The set of nodes for $\mathcal{T}_2$ is $X^{\ast}$. For $u, v \in X^*$, there exists an edge from $u$ to $v$ if $ux = v$ for some $x \in X$. 
\end{definition}
\begin{figure}[h]
\center{
\includegraphics[width=250pt]{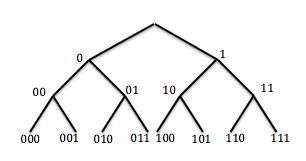}
\caption{The top portion of the infinite binary tree $\mathcal{T}_2$ with some of its nodes labeled.}
\label{fig:tree}}
\end{figure}
\begin{definition}
An \emph{infinite path} $\mathcal{T}_2$ is an infinite string of $0s$ and $1s$. 
\emph{$Ends(\mathcal{T}_2)$} is the collection of all such infinite paths. 

We say that $u \in X^{\ast}$ is a \emph{prefix} of $\omega \in Ends(\mathcal{T}_{2})$
if there is $\hat{\omega} \in Ends(\mathcal{T}_{2})$ such that $\omega = u \hat{\omega}$.
For $u \in X^{\ast}$, we let $u \ast = \{ \omega \in Ends(\mathcal{T}_{2}) \mid u \text{ is a prefix of } \omega \}$.

Define $d:Ends(\mathcal{T}_2) \times Ends(\mathcal{T}_2) \rightarrow \mathbb{R}$
by $d(\zeta_1,\zeta_2) = e^{-l}$, where $\zeta_1, \zeta_2 \in Ends(\mathcal{T}_2)$ and $l$ is the length of the longest prefix shared by $\zeta_1$ and $\zeta_2$. The function $d$ is a metric on $Ends(\mathcal{T}_{2})$.

For $w \in Ends(\mathcal{T}_{2})$, let $B_{r}(w) = \{\zeta \in Ends(\mathcal{T}_{2}) \mid d(\zeta,w) \leq r \}.$ Then $B_{r}(w)$ is the \emph{metric ball around $w$ with radius $r$}. It can be shown that each metric ball in $Ends(\mathcal{T}_{2})$ takes the form $u*$,
for some $u \in X^{\ast}$.

\end{definition}

We note that $Ends(\mathcal{T}_{2})$ is a Cantor set. Thompson's group $V$ acts as self-homeomorphisms on this Cantor set, and each element of $V$ can be represented by a binary tree pair. Furthermore, the leaves of the trees can be represented in binary code where a branch to the left is denoted by ``0" and a branch to the right by ``1."

The group $V$ is generated by the maps $A, B, C $ and $\pi_0$ \cite{CannJ}. We define the generators of V by the prefix changes they represent, which are equivalent to the tree diagrams in Figure \ref{fig:map}. For instance, if $\omega \in Ends(\mathcal{T}_{2})$ has the form $\omega = 0 \hat{\omega}$, for some $\hat{\omega} \in Ends(\mathcal{T}_{2})$, then $A(\omega) = 00 \hat{\omega}$.

\begin{gather*}
\begin{aligned}[c]
0* &\mapsto 00* \\
10* &\mapsto 01* \\
11* &\mapsto 1* \\
\\
&A
\end{aligned}
\begin{aligned}
\indent    \\
\indent    \\
\indent    \\
\indent    \\  
\end{aligned}
\begin{aligned}[c]
0* &\mapsto 0* \\
10* &\mapsto 100* \\
110* &\mapsto 101* \\
111* &\mapsto 11* \\
&B
\end{aligned}
\begin{aligned}
\indent    \\
\indent    \\
\indent    \\
\indent    \\  
\end{aligned}
\begin{aligned}[c]
0* &\mapsto 1* \\
10* &\mapsto 0* \\
11* &\mapsto 10* \\
\\
&C
\end{aligned}
\begin{aligned}
\indent    \\
\indent    \\
\indent    \\
\indent    \\  
\end{aligned}
\begin{aligned}[c]
0* &\mapsto 10* \\
10* &\mapsto 0* \\
11* &\mapsto 11* \\
\\
&\pi_0
\end{aligned}
\end{gather*}
\begin{figure}[ht]
\center{
\includegraphics[width=350pt]{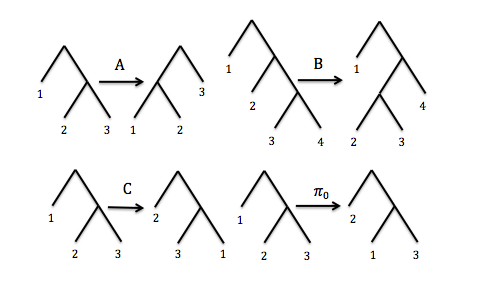}
\caption{Elements $A$, $B$, $C$, and $\pi_0$ of $V$ represented as tree pairs.}
\label{fig:map}}
\end{figure}

\subsection{Generalized Thompson Groups $V_{(G, \theta)}$}

\begin{definition}
\cite{Witza} The \emph{forest monoid}, $\mathcal{F}$, consists of all sequences of ordered, rooted, binary trees $(T_i)_{i\in \mathbb{N}}$, where all but finitely many trees are trivial (i.e., consist only of the root). For two elements $E_1, E_2 \in \mathcal{F}$, their product,  $E_1E_2$, is obtained by attaching the ith leaf of $E_1$ with the ith root of $E_2$.
\end{definition}

Given a finite group, $G$, define
\[H = S_{\infty} \ltimes_\phi (\oplus^{\infty}_{i=1} G)\]
where $\phi: S_{\infty} \to Aut(\oplus^{\infty}_{i=1} G)$ by $\phi(\sigma)(g_1, ... , g_k, ...)=(g_{\sigma^{-1}(1)}, ... g_{\sigma^{-1}(k)}, ...)$ for $\sigma \in S_{\infty}$. 

Multiplication of group elements $(\sigma_1, (g_1, .... , g_k, ...)), (\sigma_2,(g'_1, ... , g'_k, ...)) \in H$ works as follows:

\[(\sigma_1, (g_1, .... , g_k, ...)(\sigma_2,(g'_1, ... , g'_k, ...))=(\sigma_1 \sigma_2, \phi(\sigma_2)(g_1, ... , g_k, ...)(g'_1, ... , g'_k))\]
\[= (\sigma_1 \sigma_2, (g_{\sigma_2^{-1}(1)}g'_1, ... , g_{\sigma_2^{-1}(k)}g'_k, ...))\]

Our group $V_{(G,\theta)}$ arises from the cloning system construction in \cite{Witza}. A \emph{cloning system} consists of a group $H$, a homomorphism $\rho:H \to S_\omega$, and a collection of cloning maps $\{ \kappa_k \mid k \in \mathbb{N} \}$. A cloning system with these three elements that satisfies conditions $CS1, CS2, CS3$ of Proposition 2.7 \cite{Witza} defines a BZS product, $\mathcal{F} \bowtie H$. We do not go into detail on the BZS product; for a full discussion see \cite{Witza}.

Elements of $V_{(G,\theta)}$ are ordered pairs of elements from a subgroup of the BZS product defined by the following cloning system:
\[H = S_{\infty} \ltimes_\phi (\oplus^{\infty}_{i=1} G) \text{,}\]
\[\rho(\sigma, (g_1, ... , g_k))=\sigma \text{,}\]
where $\kappa_k$ acts on the right by:
\[(\sigma, (g_1, ... , g_k,g_{k+1}, ... ))\kappa_k = (\sigma \zeta,(g_1, ... , g_k, \theta(g_k),g_{k+1}, ...))\]
where $\zeta$ is the cloning map for the symmetric group defined in Example 2.9. of \cite{Witza}, and $\theta$ is an arbitrary homomorphism from $G\to G$.

We can think of elements of $V_{(G,\theta)}$ as equivalence classes of tree pairs much as in  Thompson's group $V$. The difference is that tree pairs in $V_{(G,\theta)}$ have group elements from $G$ attached to their leaves. A tree pair $(a,b)$ can be modified within its equivalence class by adding canceling carets or canceling group elements. 

Canceling carets are added to corresponding leaves of the domain and range trees just as they would be in Thompson's group V, unless there is a group element $g$ on the leaf, in which case we put a $g$ on the left branch of the new caret and $\theta(g)$ on the right branch. Canceling group elements are added to corresponding leaves of the domain and range trees and are combined, using the group operation, with any group element already on the leaves. We can multiply two elements $(a,b), (c,d) \in V_{(G,\theta)}$, by choosing equivalent tree pairs $(a',b')$ and $(c',d')$ where $b'=c'$. Then $(a,b)(c,d)=(a',b')(c',d')=(a',d')$. (Note that, in these tree pairs, the second coordinate corresponds to the domain, and the first to the range.)

\begin{figure}[h]
\center{
\includegraphics[width=430pt]{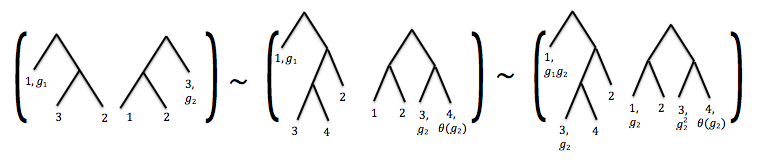}
\caption{Three equivalent tree pairs, the second is obtained by adding a canceling caret and the third by canceling group elements.}
\label{fig:cancelcare}}
\end{figure}

\begin{figure}[h]
\center{
\includegraphics[width=320pt]{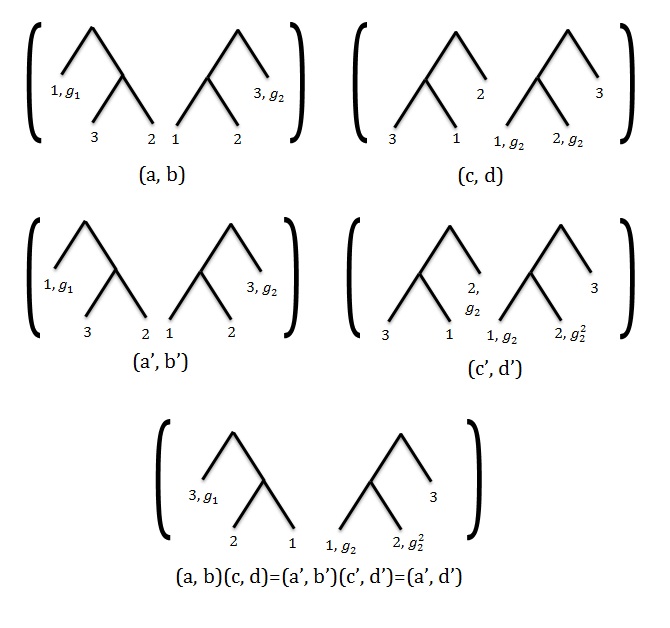}
\caption{Multiplication of two group elements (a,b) and (c,d).}
\label{fig:grultiple}}
\end{figure}

Note that any element $(a,b) \in V_{(G,\theta)}$ can be expressed with no group elements in the domain tree by adding canceling group elements.

If $\{g_1, ... , g_n\}$ is a generating set for $G$, then $\{A,B,C,\pi_0\} \cup \{g_{ja}, g_{jb}, g_{jc}, g_{jd}, g_{je} \mid 1 \le j \le n\}$ is a generating set for $V_{G,\theta}$, where $ g_{ja}, g_{jb}, g_{jc}, g_{jd}, g_{je}$ are defined as in Figure \ref{fig:??}.

\begin{figure}[h]
\center{
\includegraphics[width=350pt]{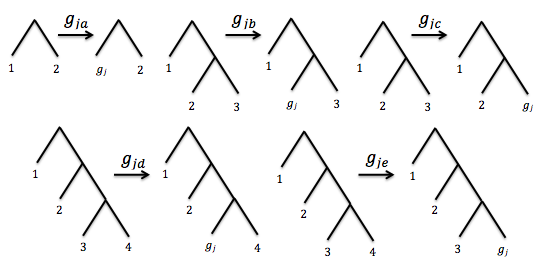}
\caption{Additional generators of $V_{(G, \theta)}$.}
\label{fig:??}}
\end{figure}

\begin{remark}
When $\theta = id_G$, the identity homomorphism,  $V_{(G,\theta)}$ embeds in $V$. Consider $V_{(G,\theta)}$ where $\theta$ is the identity homomorphism, $G=\{g_1, ... , g_n\}$, and $(a,b) \in V_{(G,\theta)}$. We assume further that the domain tree $b$ has no group elements on its leaves. Choose a partition $W=\{w_1*, ... , w_n*\}$ where $w_i$ will be the prefix associated with group element $g_i$. Consider the jth leaf of the tree pair, $b_j* \to a_jg_j*$. If $g_ig_j = g_k$, then $w_ib_j*$ is mapped to $w_kb_j*$. This assignment encodes each $(a,b) \in V_{(G, \theta)}$ as an element of $V$ in an injective fashion. However, when $\theta \not= id_g$, the same method of embedding fails. 


\end{remark}

\section{Demonstrative Groups} \label{demonstrative}

\begin{definition} \cite{Bleak1}
Suppose a group $G$ acts by homeomorphisms on a topological space $X$. For a group $H \le G$, the action of $H$ in $G$ is \emph{demonstrative} if and only if there exists an open set $U \subset X$ such that for all $h_1, h_2 \in G,\text{ } h_1U \cap h_2U \not= \emptyset \text{ if and only if } h_1=h_2$. The set $U$ is called a \emph{demonstration set}. 

For this discussion, let $G = V$ and $X = Ends(\mathcal{T}_2)$. In this case, if $U$ (as above) is a metric ball (i.e. $U = w*$ for some $w* \subseteq Ends(\mathcal{T}_2)$), then we refer to $U$ as a \emph{demonstration node}.
\end{definition}

\begin{definition}
Let $H \le G$ and let $H$ act on a topological space $S$. We define $G \times_H S$ to be $\{(g,s) : g\in G, s \in S\}/\sim$, where $(gh,s) \sim (g, h\cdot s)$. 

The \emph{induced action} of $G$ on $S$ is $*: G \times (G \times_H S) \to G\times_H S$ defined by $g_1*(g_2,s)=(g_1g_2,s)$.
\end{definition}

\begin{theorem}\label{dem1}
If $H \le G \text{ where }[G:H] =m, \text{ for some } m\in \mathbb{N}$, then $G$ embeds in $V$. Moreover, if $H$ embeds as a demonstrative subgroup in $V$, then $G$ embeds as a demonstrative subgroup of $V$.
\end{theorem}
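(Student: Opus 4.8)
The plan is to build the embedding of $G$ into $V$ from a demonstrative (or at least faithful) action of $H$ on $Ends(\mathcal{T}_2)$ by passing to the induced action of $G$ on $G \times_H Ends(\mathcal{T}_2)$, and then identifying this space with a Cantor set on which $V$ naturally acts. First I would fix a transversal $\{t_1 = e, t_2, \dots, t_m\}$ for the cosets of $H$ in $G$, so that every element of $G \times_H Ends(\mathcal{T}_2)$ has a unique representative of the form $(t_j, \omega)$ with $\omega \in Ends(\mathcal{T}_2)$. This makes $G \times_H Ends(\mathcal{T}_2)$ a disjoint union of $m$ copies of $Ends(\mathcal{T}_2)$, hence itself (homeomorphic to) a Cantor set; concretely one can embed it as $w_1 * \sqcup \dots \sqcup w_m *$ inside $Ends(\mathcal{T}_2)$ using $m$ disjoint cones $w_j*$ coming from a partition of the tree. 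The induced action $g * (t_j, \omega) = (g t_j, \omega)$ then becomes an action of $G$ on this Cantor set: writing $g t_j = t_{\pi_g(j)} h_{g,j}$ with $h_{g,j} \in H$, the element $g$ sends the cone $w_j*$ onto the cone $w_{\pi_g(j)}*$ via the prefix-replacement $w_j \mapsto w_{\pi_g(j)}$ composed with the action of $h_{g,j}$ on $Ends(\mathcal{T}_2)$ inside that cone.

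Next I would check that each such $g$ acts as an element of $V$: it is a bijection of the Cantor set that is piecewise a prefix replacement (since it is a finite "permutation of cones" followed, on each cone, by an element of $V \le \mathrm{Homeo}(Ends(\mathcal T_2))$ given by the embedded copy of $H$), and composition and inverses of such maps are again of this form, so we get a homomorphism $\Psi : G \to V$. Faithfulness of $\Psi$ follows from faithfulness of the $H$-action on $Ends(\mathcal{T}_2)$ together with the fact that the induced action of $G$ on $G \times_H Ends(\mathcal{T}_2)$ is faithful whenever the original $H$-action is: if $g$ acts trivially, then in particular it fixes each cone setwise, so $\pi_g = \mathrm{id}$ and each $h_{g,j}$ acts trivially on $Ends(\mathcal{T}_2)$, forcing $h_{g,j} = e$ and hence $g = e$. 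This gives the first assertion, $G \hookrightarrow V$.

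For the "moreover" clause, suppose the embedding $H \hookrightarrow V$ is demonstrative with demonstration node $U = u* \subseteq Ends(\mathcal{T}_2)$; I want to produce a demonstration node for $\Psi(G)$. The natural candidate is the single cone $U' := w_1 u *$ lying inside the first copy $w_1*$ — i.e. the image of $U$ under the embedding of the $j=1$ copy of $Ends(\mathcal{T}_2)$. I would verify the demonstration condition $g_1 U' \cap g_2 U' \neq \emptyset \iff g_1 = g_2$: writing $g_i = t_{j_i} h_i$ against the transversal, $g_i U'$ lands in the cone $w_{j_i} *$ and equals $w_{j_i} h_i(u)*$ inside it; so $g_1 U' \cap g_2 U' \neq \emptyset$ forces first $j_1 = j_2$ (the cones must coincide) and then $h_1(u) * \cap h_2(u) * \neq \emptyset$ inside $Ends(\mathcal{T}_2)$, which by demonstrativeness of $H$ gives $h_1 = h_2$, hence $g_1 = g_2$; the converse is immediate. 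Thus $U'$ is a demonstration node for $\Psi(G)$ and the induced embedding is demonstrative.

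The main obstacle I expect is the bookkeeping in the second paragraph: verifying cleanly that the "permute the cones, then act by $H$ inside each cone" maps really do form a subgroup of $V$ and that $\Psi$ is a well-defined homomorphism independent of the chosen transversal and cone-partition — in particular handling the cocycle $h_{g,j}$ correctly under composition, $(g'g) t_j = t_{\pi_{g'}(\pi_g(j))} \, h_{g',\pi_g(j)} h_{g,j}$. Once that is set up, faithfulness and the demonstration-set argument are short. A secondary point worth care is that a demonstration node must be a metric ball $w*$; choosing $U'$ to be a single cone rather than a union of cones is what makes this work.
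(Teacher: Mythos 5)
Your proposal is correct and takes essentially the same route as the paper: both pass to the induced action on $G \times_H Ends(\mathcal{T}_2)$ via a transversal, embed that space as a partition of cones $w_1*, \dots, w_m*$ in $Ends(\mathcal{T}_2)$, and take $w_i u*$ (the image of the demonstration node of $H$ in one cone) as the demonstration node for $G$. Your extra attention to the cocycle identity and to faithfulness fills in details the paper leaves to the reader, but the underlying argument is the same.
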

\begin{proof}Assume $H \le G$. Choose a left transversal $T = \{t_1, t_2, ... , t_m\}$ for $H$ in $G$ with $t_1=1$. 
We can induce an action of $G$ on $G \times_H Ends(\mathcal{T}_2)$ by:
\[g \cdot (t_i,x) = (gt_i,x), \text{ for } x \in Ends(\mathcal{T}_2)\]
We know we can write $gt_i$ as $t_jh$ for unique $t_j \in T$ and $h \in H$. So,
\[(gt_i,x)=(t_jh,x) = (t_j, h \cdot x)\]
Now, we can embed $G \times_H Ends(\mathcal{T}_2) \hookrightarrow Ends(\mathcal{T}_2)$ by choosing a set $W = \{ w_{1}, \ldots, w_{m} \}$ such that  $\{w_1\ast , w_2\ast , \dots, w_m\ast\}$ is a partition of $Ends(\mathcal{T}_2)$, and defining an injective function $\phi : T \to W$ by $\phi (t_i)=w_i$. Now define $\Phi: G \times_H Ends(\mathcal{T}_2) \to Ends(\mathcal{T}_2)$ by $\Phi((t_i,x)) = \phi(t_i)x=w_ix$.

It can be easily checked that 
\[g \cdot (w_ix)=g \cdot \Phi(t_i,x)=\Phi(gt_i,x)=\Phi(t_j,h \cdot x)=w_jh(x)\] 
is a group action of $G$ on $Ends(\mathcal{T}_2)$. Additionally, because elements of $H$ act as elements of $V$ and $[G:H] < \infty$, so do elements of $G$. Therefore, $G$ embeds in $V$. 

Now, assume $H$ has a demonstrative embedding in $V$ with demonstration node $a_1a_2 ... a_{n}* \text{ for } a_i \in \{0,1\}$.

We will show that $w_ia_1 \dots a_n*$ is a demonstration node for $G$. We compute the action of each of $g,g' \in G$ on $w_ia_1 \dots a_n*$:
\begin{align*}
g \cdot w_ia_1 \dots a_n* &= w_jh(a_1 \dots a_n*) \\
g' \cdot w_ia_1 \dots a_n* &= w_kh'(a_1 \dots a_n*)
\end{align*}
Since $w_1*, \dots , w_m*$ partition $Ends(\mathcal{T}_2)$, if $w_{j}\ast \cap w_{k}\ast \not=\emptyset$ then we have $j=k$. Since $H$ is a demonstrative subgroup of $G$ with demonstration node $a_1a_2 \dots a_n *$, $h(a_1 \dots a_n \ast) \cap h'(a_1 \dots a_n*) \not= \emptyset$ if and only if $h=h'$. Thus, $w_jh(a_1 \dots a_n*) \cap w_kh'(a_1 \dots a_n*) \not= \emptyset$ if and only if $j=k$ and $h=h'$, in other words, if and only if $g=g'$. Thus, $G$ is demonstrative in $V$ with demonstration node $w_ia_1 \dots a_n* \text { for any } i \in \{1, ... , m\}$. 
\end{proof}

\section{Main Result} \label{maintheorem}

\begin{lemma} \label{testpartition}
Let $\Sigma = \{A,B,C, \overline{A}, \overline{B}, \pi_0,g_{ij}\}$, where  $i \in \{ 1, \ldots, n \}$, and $j \in \{ a, b, c, d, e \}$. Define $\Phi: \Sigma^* \to V$ by the homomorphism 
\begin{align*}
\Phi: A &\mapsto A \\
\bar{A} &\mapsto A^{-1} \\
B &\mapsto B \\
\bar{B} &\mapsto B^{-1} \\
C &\mapsto C \\
\pi_0 &\mapsto \pi_0 \\
g_{ij} &\mapsto 1_V
\end{align*} 
If $w=b_1 \cdots b_m \in \Sigma^*$ satisfies $\Phi(w) \not= 1$, then there is a cyclic permutation $b_j \cdots b_mb_1 \cdots b_{j-1}$ and some $B \in \{a_1a_2a_3*:a_i \in {0,1}\}$ that satisfy $b_j \cdots b_mb_1 \cdots b_{j-1}(B)\cap B \not= \emptyset$. (Here the action of a word $w \in \Sigma^{\ast}$ on the ball $B$ is determined by
the rule $w(B) = \Phi(w)(B)$.)
\end{lemma}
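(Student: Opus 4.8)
The plan is to reduce the statement to a concrete fact about the action of $V$ on $Ends(\mathcal{T}_2)$ at the level of depth-$3$ balls, and then push all difficulty onto a counting/pigeonhole argument. First I would observe that since $g_{ij}\mapsto 1_V$, the hypothesis $\Phi(w)\neq 1$ depends only on the image of $w$ under the projection onto the free product/generating set of $V$, and the conclusion is likewise insensitive to the $g_{ij}$'s; so I may assume without loss of generality that $w$ is a word in $A,B,C,\overline A,\overline B,\pi_0$ alone. Now set $g=\Phi(w)\in V$, $g\neq 1$. The set $\mathcal{B}=\{a_1a_2a_3*: a_i\in\{0,1\}\}$ is a partition of $Ends(\mathcal{T}_2)$ into $8$ balls. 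The key claim I would isolate is: \emph{if $g\in V$ is nontrivial and $g$ acts on $Ends(\mathcal{T}_2)$, then some cyclic conjugate of $w$ — equivalently some element of the form $w'=\Phi(b_j\cdots b_m b_1\cdots b_{j-1})$, which is a conjugate $\Phi(b_1\cdots b_{j-1})^{-1}\, g\, \Phi(b_1\cdots b_{j-1})$ of $g$ — moves some ball $B\in\mathcal B$ so that it overlaps itself.} Since cyclic permutations of $w$ give exactly the conjugates $u^{-1}gu$ where $u$ runs over the prefixes $\Phi(b_1\cdots b_{j-1})$, it suffices to find one prefix $u$ and one ball $B\in\mathcal B$ with $u^{-1}gu(B)\cap B\neq\emptyset$.

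The main step, then, is to understand when a nontrivial $g\in V$ can avoid ``self-overlap'' on all eight depth-$3$ balls, i.e.\ when $g(B)\cap B=\emptyset$ for every $B\in\mathcal B$ simultaneously. I would argue that this forces $g$ to permute the eight balls of $\mathcal B$ as a \emph{fixed-point-free} permutation (if $g(B)$ overlaps $B$ but doesn't contain or isn't contained in it, one still gets nonempty intersection, so in fact avoidance of self-overlap is strictly stronger than fixed-point-freeness of the induced permutation — I'd want $g$ to both permute $\mathcal B$ as a set and do so without fixed points). But a generic nontrivial element of $V$ need not even preserve $\mathcal B$; the point is that we are allowed to \emph{conjugate}. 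So the real content is: for any nontrivial $g\in V$, there is a conjugate $u^{-1}gu$ (with $u$ ranging over the specific finite list of prefixes of $w$) that fails to be a fixed-point-free permutation of $\mathcal B$, hence self-overlaps some $B$. The cleanest route is contrapositive: suppose \emph{every} cyclic permutation of $w$ self-overlaps no ball of $\mathcal B$. I'd then show, working prefix by prefix along $w=b_1\cdots b_m$, that this is impossible unless each generator's contribution is ``invisible'' at depth $3$ — but $A,B,C,\pi_0$ visibly do not preserve the depth-$3$ partition (e.g.\ $A$ sends $0*$ to $00*$, so it sends the depth-$\le 3$ ball $000*$ into $0000*\subsetneq 000*$, giving self-overlap of $000*$ already for $w=A$), and more generally any reduced nonempty word in these generators, after suitable cyclic rotation, exhibits such a collapse. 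I would make this precise by tracking, for the partial images $\Phi(b_i\cdots b_m b_1\cdots b_{i-1})$, where a fixed small ball goes, and invoking that $V$ acts with the property that the only element preserving \emph{all} balls setwise is the identity.

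The hardest part, and where I would spend the most care, is exactly this last combinatorial claim — ruling out the possibility that a nontrivial $g$, together with \emph{all} of its relevant cyclic conjugates, acts as a fixed-point-free permutation of $\mathcal B$. The subtlety is that fixed-point-free permutations of an $8$-element set certainly exist, so the argument cannot be purely about the induced permutation; one must use that an element of $V$ that permutes the depth-$3$ balls is determined at depth $3$, together with the fact that cyclic conjugation by the prefixes of a word realizing $g$ cannot keep us inside the (finite, conjugation-closed-only-under-$V$-conjugacy, not prefix-conjugacy) set of such special elements unless $g$ was trivial to begin with. I expect the author handles this by choosing the ball $B$ adaptively: given $w$, look at the first generator $b_1$; if $\Phi(b_1\cdots b_m)$ already self-overlaps some $B\in\mathcal B$ we are done, otherwise rotate by one and use that the ``defect'' must surface somewhere in the cycle because the total product is $g\neq 1$. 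I would structure the write-up as: (1) reduce to $w$ over $\{A,B,C,\overline A,\overline B,\pi_0\}$; (2) translate ``cyclic permutation'' into ``conjugate by a prefix''; (3) reduce the goal to finding one conjugate that is not a fixed-point-free self-avoiding permutation of the eight depth-$3$ balls; (4) prove that if all such conjugates were self-avoiding on $\mathcal B$ then $g=1$, by a depth-$3$ bookkeeping argument using the explicit prefix rules for $A,B,C,\pi_0$.
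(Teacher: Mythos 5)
Your steps (1) and (2) --- discarding the $g_{ij}$ because $\Phi(g_{ij})=1_V$, and translating cyclic permutations into conjugates by prefixes --- reproduce the easy part of the paper's argument. But the paper does not prove your step (4) at all: its entire proof consists of \emph{citing} the theorem that $\{a_1a_2a_3\ast : a_i\in\{0,1\}\}$ is a test partition for $V$ (from \cite{Farley}, going back to Lehnert and Schweitzer's proof that $V$ is co$\mathcal{CF}$), and then observing that the $g_{ij}$ do not affect the underlying action on balls. Your proposal, by contrast, undertakes to reprove that test-partition theorem, and this is exactly where it has a genuine gap: you explicitly defer ``the hardest part'' without supplying an argument, and the mechanism you sketch --- reducing to fixed-point-free permutations of the eight balls --- cannot be made to work, because a nontrivial element of $V$ generally induces no permutation of the depth-$3$ partition at all (e.g.\ $A(000\ast)=0000\ast$ is a proper sub-ball, while $C(100\ast)=00\ast$ is a union of two balls), so there is no finite permutation to do combinatorics with. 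The actual proof is a ball-tracking argument: choose $x$ with $g(x)\neq x$ and a ball $u\ast\ni x$ deep enough that $g(u\ast)\cap u\ast=\emptyset$, then follow the images of $u\ast$ and $g(u\ast)$ through the successive suffixes of $w$, using that each generator rewrites only prefixes of length at most $3$; one shows that some prefix-conjugate must carry a depth-$3$ ball off of itself. Nothing equivalent to this appears in your proposal, so the central claim is unproved.

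There is also a polarity problem you inherited from the statement. As printed, the conclusion $w'(B)\cap B\neq\emptyset$ must be a typo for the condition actually invoked in Theorem~\ref{main}, namely $w'(B_i)\cap B_j\neq\emptyset$ for some $j\neq i$ (equivalently $w'(B_i)\not\subseteq B_i$), which is what membership in $\mathcal{L}_{B_i}$ means. The literal version is false: the generator $C$ sends each of the eight depth-$3$ balls to a set disjoint from itself (e.g.\ $C(000\ast)=100\ast$, $C(100\ast)=00\ast$, $C(110\ast)=100\ast$), and a one-letter word has no other cyclic permutations. Your proposal targets the literal version --- your ``easy'' witness, $A(000\ast)=0000\ast\subsetneq 000\ast$, is precisely a configuration the automaton of Theorem~\ref{main} does \emph{not} detect, since the ball never leaves $000\ast$. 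Any correct write-up must aim at the condition $w'(B)\not\subseteq B$ and, for the hard step, either cite the test-partition theorem as the paper does or actually carry out the ball-tracking argument.
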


\begin{proof}
Consider $w=b_1 \cdots b_m \in \Sigma^*$. If $\Phi(w) \not= 1$, then $\Phi(w) \in CoWP(V)$. We know $\{a_1a_2a_3*:a_i \in {0,1}\}$ is a test partition for $V$ \cite{Farley}. So, there is some cyclic permutation $\Phi(w)'$ of $\Phi(w)$ and some $B \in \{a_1a_2a_3*:a_i \in \{0,1\}\}$ such that $\Phi(w)'(B) \cap B \not= \emptyset$. Since $\Phi$ takes all the generators $g_{ij}$ to 1, $\Phi$ will preserve the shape of any tree pair. Thus, if $\Phi(w)'$ is such that $\Phi(w)'(B) \cap B \not= \emptyset$, then $w'(B) \cap B \not= \emptyset$ where $w'$ is some cyclic shift $b_j \cdots b_mb_1 \cdots b_{j-1}$ of $w$. 
\end{proof}

\begin{definition}
Let $\mathcal{L}$ be a language. The \emph{cyclic shift} of $\mathcal{L}$, denoted $\mathcal{L}^{\circ}$, is
\[\mathcal{L}^{\circ}=\{w_2w_1 \in \Sigma^* \mid w_1w_2 \in \mathcal{L}, w_1, w_2 \in \Sigma^*\}. \]

A \emph{cyclic permutation} $w^{\prime}$ of a word $w = w_1w_2$ is $w^{\prime} = w_2w_1$. 
Note that the class of context-free languages is closed under cyclic shifts \cite{Maslov}.
\end{definition}

\begin{theorem} \label{main}
The group $V_{(G,\theta)}$ is co$\mathcal{C} \mathcal{F}$.
\end{theorem}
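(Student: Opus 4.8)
The plan is to build a pushdown automaton $P$ whose accepted language is essentially the word problem of $V_{(G,\theta)}$ detected ``up to cyclic shift'' in the sense already used in Lemma~\ref{testpartition}, and then to invoke closure of context-free languages under cyclic shift to conclude that $CoWP_\Sigma(V_{(G,\theta)})$ is context-free. The key structural observation is that an element $(a,b)$ of $V_{(G,\theta)}$ is nontrivial exactly when either (i) its image $\Phi(w)$ in $V$ is nontrivial, or (ii) $\Phi(w)=1$ but some leaf carries a nontrivial group element. By Lemma~\ref{testpartition}, case (i) is witnessed by picking one of the finitely many balls $B = a_1a_2a_3\ast$ and checking, after a cyclic shift, that $w'(B)\cap B\neq\emptyset$ where the action on $B$ is by prefix replacement. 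So the automaton should nondeterministically guess a starting ball $B$ (a length-$3$ address), guess the cyclic break point, and then track the image under the word of that ball through the stack.

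The central step is to explain what the PDA stores on its stack and how each generator acts on it. I would have the stack hold the current ``address'' of the tracked ball together with a bookkeeping record of the group element decorating it, i.e.\ a string over an alphabet encoding $\{0,1\}$-prefixes interleaved with symbols $g\in G$ (a finite set, since $G$ is finite). Reading a generator from $\Sigma$ (the generators $A,B,C,\pi_0$ of $V$ and the decorations $g_{ij}$) induces a bounded, local rewrite of the top of the stack: the tree-pair generators $A,B,C,\pi_0$ perform prefix substitutions exactly as in the familiar $V$-automaton of Farley, occasionally refining the tracked ball by pushing a bounded number of symbols (this is where the generalized PDA's ability to push several letters matters), and the decoration generators $g_{ij}$ either leave the address alone and left-multiply the recorded group element by $g_i$ when the relevant leaf is a prefix of the tracked address, or, when a $\theta$-clone is triggered, replace a recorded $g$ on an internal branch by $\theta(g)$ on its sibling, matching the rule $(\sigma,(\dots,g_k,\dots))\kappa_k = (\sigma\zeta,(\dots,g_k,\theta(g_k),\dots))$. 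A terminal state is reached when, after the whole (cyclically shifted) word has been read, the tracked ball's image overlaps $B$ \emph{and} the group decoration recorded there is nontrivial (case (ii)), or alternatively the automaton runs a parallel branch that just checks $\Phi(w)'(B)\cap B\neq\emptyset$ ignoring decorations (case (i)); both branches live in the same $P$ by nondeterminism at the start.

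Once $P$ is built, I would prove the language identity $CoWP_\Sigma(V_{(G,\theta)}) = \mathcal{L}_P^{\circ}$: the inclusion $\supseteq$ follows because any successful path exhibits either a point moved or a nontrivial decoration after cyclic reassembly, both of which certify $w\neq 1$ in $V_{(G,\theta)}$; the inclusion $\subseteq$ uses that if $(a,b)\neq 1$ then after reducing to no group elements on the domain tree, either $\Phi(w)\neq 1$ (apply Lemma~\ref{testpartition} and the $V$-part of the automaton) or $\Phi(w)=1$ with some nontrivial leaf decoration, which, after choosing a ball $B$ beneath that leaf and the right cyclic break, is detected by the decoration-tracking branch — here one must check the decoration is stable under the residual tree moves, which is exactly the content of the $\theta$-caret rule. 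Then $\mathcal{L}_P$ is context-free by Definition~\ref{def:accept}, so $\mathcal{L}_P^{\circ}$ is context-free by \cite{Maslov}, proving $V_{(G,\theta)}$ is co$\mathcal{CF}$.

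\textbf{Main obstacle.} The delicate point is the interaction between the tree moves and the group decorations: when a tree-pair generator refines or collapses the tracked ball, a recorded group element $g$ must be propagated correctly as $g$ on the left child and $\theta(g)$ on the right child (and inversely when collapsing a cancelling caret), and one must verify that the finite stack alphabet suffices — i.e.\ that the image $\theta^k(g)$ values that can accumulate on the single tracked leaf always lie in the finite set $G$, and that we only ever need to remember the decoration on one leaf at a time rather than on many leaves simultaneously. Making the bookkeeping local enough to fit a PDA stack, while faithful enough to distinguish a nontrivial leaf decoration from a trivial one after arbitrary cancellations, is the heart of the argument.
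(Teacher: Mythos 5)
Your proposal follows essentially the same route as the paper: nondeterministically guess a test-partition ball and a depth, track its address on the stack with interleaved $G$-decorations propagated by the rule $g \mapsto (g,\theta(g))$, accept either when the ball is displaced (via Lemma~\ref{testpartition}) or when a nontrivial decoration survives, and then pass to the cyclic shift using \cite{Maslov}. The "main obstacle" you flag is resolved exactly as you suspect — the paper's clean-up phase consolidates all decorations along the single tracked branch into one element of the finite group $G$, so the stack alphabet stays finite — and your two-case split ($\Phi(w)\neq 1$ versus $w\in\ker\Phi\setminus\{1\}$) is precisely the paper's.
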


\begin{proof} 
 
We design an automaton $P$ to accept by empty stack, with stack alphabet $\Gamma = \{0, 1, g \mid g \in G \backslash \{1_G\} \}$ and input alphabet $\Sigma = \{A, B, C, \overline{A}, \overline{B}, \pi_0, g_{ij}\}$, where $i \in \{ 1, \ldots, n \}$, and $j \in \{ a, b, c, d, e \}$.

We define 
\[ \mathcal{L}_{B_i} = \{w \in \Sigma^* \mid w(B_i) \cap B_j \not= \emptyset \text{ for some }j \not= i\}. \]
We let $\mathcal{L}_{G}$ be the set of words $w$ in $\Sigma^{\ast}$ such that there is a tree pair representative for $w \in V_{(G, \theta)}$ with no group elements written on the leaves of the domain tree, and at least one non-trivial $g \in G$ written on a leaf of the range tree.

We design $P$ such that $\mathcal{L}_{P}= (\mathop{\bigcup}_{i=1}^8 \mathcal{L}_{B_i}) \cup \mathcal{L}_G $. Figure \ref{fig:bigautomaton} outlines a portion of the automaton. Note that unlabeled arrows represent $(\epsilon, \epsilon, \epsilon)$ transitions. From the initial loading phase, there are in fact eight different arrows $(\epsilon, \epsilon, B_{k})$, one for each of the $B_{k} \in \{  000, 001, \ldots, 111 \}$. These lead to eight separate reading and accept phases, each as pictured in the Figure. These reading and accept phases are identical, with one exception: the labels on the arrows leading to the test partition accept state vary. For instance, in the accept phase corresponding to $000$, the arrow labelled $(\epsilon, B_{l}, \epsilon)$ corresponds to seven different arrows, one
for each $B_{l} = a_{1}a_{2}a_{3}$, where $a_{i} \in \{ 0, 1 \}$ and not all of $a_{1}$, $a_{2}$, $a_{3}$ are $0$.

\begin{figure}[h]
\center{
\includegraphics[width=400pt]{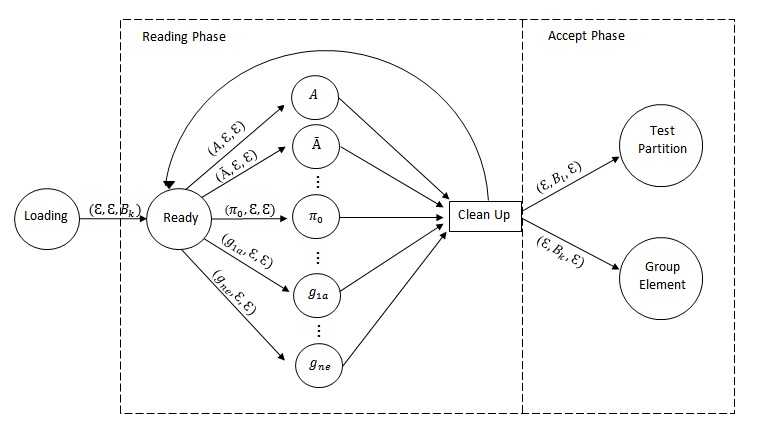}
\caption{Sample reading and accept phases of the automaton for $V_{(G,\theta)}$.}
\label{fig:bigautomaton}}
\end{figure}

To start off, $P$ enters a non-deterministic loading phase. This consists of a single state $S$ with transitions labelled $(\epsilon, \epsilon, 0)$ and $(\epsilon, \epsilon, 1)$, both leading back to $S$. Here a finite string of 1's and 0's is entered non-deterministically onto the  stack. 

We leave the loading phase by taking a transition $(\epsilon, \epsilon, B_k)$ where
\[B_k \in \{000, 001, 010, 100, 011, 110, 101, 111\},\] 
i.e. $B_k$ is one of the 8 metric balls in the test partition. 

Next, $P$ enters the reading phase which has a single state for each generator, where the first element on the input tape is read and that generator is applied to the appropriate prefix at the top of the stack. 
For example, the reading phase for A would read and delete a 0, and then add 00; the reading phase for $g_{2b}$ would read and delete 10, and then add $10g_2$.

After the reading phase, $P$ enters the clean-up state, which consists of the pushing and combining of stack elements. This phase allows $P$ to ``clean-up" the stack so that there are at least three 0's and 1's for the next element on the input tape to successfully act on the stack. First, $P$ ``pushes" elements within the first three spots to the fourth spot on the stack. For example, as shown in Figure \ref{cleanup}, one set of transitions will read and delete $0g0$ or $0g1$ from the stack and then add $00g$ or $01$($\theta(g)$), respectively, to the stack for all $g \in G \backslash \{ 1_G \}$. Similar transitions can be followed if the group element is preceded by the prefix 0, 1, 01, 10, 00, or 11. 

\begin{figure}[ht]
\center{
\includegraphics[scale=.7]{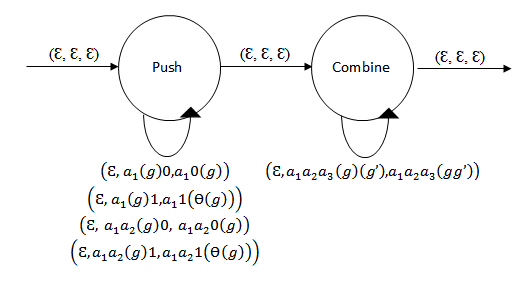}
\caption{The clean-up phase. Let $a_1, a_2, a_3 \in \{0, 1\}$ and $g,g^{\prime} \in G \backslash \{I_G\}$. }}
\label{cleanup}
\end{figure}

Next, $P$ enters the combining state where group elements are rewritten as a single element of the group (in accordance with the group operation). For example, one collection of edges is able to read and delete $010(g)(g')$ and add $010(gg')$ for all $g, g' \in G \backslash \{1_G\}$. Note that if $gg'=1_G$, then the path reads and deletes $010(g)(g')$, and writes 010. Similar paths exist when combining any two group elements preceded by any three-digit prefix.

After exiting the clean-up phase, $P$ reads the next element on the input tape and repeats the process of the reading phase. When the input tape is empty and $P$ has gone through the reading and clean-up phases, $P$ then moves onto one of two accept states.

If the word from the input tape took one metric ball $B_k$ in the test partition to some other metric ball $B_l$, then the three-letter prefix describing $B_{l}$ is now showing on the stack, so we can follow a path labelled $(\epsilon, B_{l}, \epsilon)$ to the \emph{test partition accept state}. (Here we recall that the single arrow labelled $(\epsilon, B_{l}, \epsilon)$ in
Figure \ref{fig:bigautomaton} is actually seven different arrows, one for each $l$ such that $B_{l} \neq B_{k}$.) At this point we ``unload" all of the 0's and 1's and group elements off the stack with paths $(\epsilon,x,\epsilon)$ for $x \in \{0, 1, g \mid g \in G\backslash \{1_G\}\}$. Once every stack element has been deleted, $P$ takes the path $(\epsilon,\#,\epsilon)$ which deletes the start symbol and accepts the word. So the language accepted by the eight test partition accept states is $(\mathop{\bigcup}_{i=1}^8 \mathcal{L}_{B_i})$.

If the word from the input tape does not displace metric ball $B_k$, then we enter the \emph{group element accept state}. Here, we delete every 0 and 1 on the stack until $P$ arrives at a group element. The group element is then``pushed" further down the stack, and the 0 or 1 it pushes past is deleted.  For example, one path is $(\epsilon, g0, g)$ for $g \in G \backslash \{1_G\}$. If the group element is followed on the stack by a second group element, then they are ``combined" in a manner mimicking the previously described combining portion of the clean-up phase. This is repeated until there are no 0's or 1's left on the stack. At this point, if there is still a group element remaining on the stack followed by the start symbol, then they are both deleted and thus the word is accepted. However, if there is no group element on the stack then the start symbol cannot be deleted so the word is not accepted. Assuming that the address of an appropriate metric ball was written on the stack in the loading phase, there will be a group element remaining, and so the language accepted by the eight group element accept states is $\mathcal{L}_G$. 

Thus, $\mathcal{L}_P = (\mathop{\bigcup}_{i=1}^8 \mathcal{L}_{B_i}) \cup \mathcal{L}_G $.

We claim that $\mathrm{CoWP}(V_{(G, \theta)})= (\mathcal{L}_P)^{\circ}$.

Let $w \in \mathcal{L}_P$, so that $w \in  \mathcal{L}_G$ or $w \in \mathcal{L}_{B_{i}}$, for some $i$. If $w \in \mathcal{L}_G$, then it follows directly that $w \in $ CoWP($V_{(G, \theta)}$). If $w \in \mathcal{L}_{B_{i}}$ for some $1\leq i\leq8$, then $w(B_i) \cap B_j$ for $j\not= i$, so $w \in $ CoWP($V_{(G, \theta)})$. Therefore, $\mathcal{L}_P \subseteq$ CoWP($V_{(G, \theta)}$). The CoWP of a group is closed under cyclic shift, and thus $(\mathcal{L}_P)^{\circ} \subseteq$ CoWP($V_{(G, \theta)}$).


Let $w \in $CoWP($V_{(G, \theta)}$). We will use the surjective homomorphism $\Phi$ from Lemma \ref{testpartition}.
If $w \notin Ker(\Phi)$, then $\Phi(w) \not=1_V$. By Lemma \ref{testpartition}, there is some cyclic permutation $w'$ and some $B_i$ such that $w' (B_i) \cap B_i \not= \emptyset$, i.e. $w^{\prime} \in \mathcal{L}_{B_i}$. Therefore, $w \in (\mathcal{L}_{B_i})^{\circ}\subseteq (\mathcal{L}_P)^{\circ}$.
If $w \in Ker(\Phi) \backslash \{1_{V_{(G,\theta)}} \}$, then $\Phi(w)={1_V}$, which implies that $w$ (as a reduced tree pair) does not change any prefixes; it simply adds group elements. Therefore, $w\in{\mathcal{L}_G}\subseteq (\mathcal{L}_P)^{\circ}$.

We now have that CoWP($V_{(G, \theta)}$)= $(\mathcal{L}_P)^{\circ}$. A language is context-free if and only if its cyclic shift is also context-free. Since $\mathcal{L}_P$ is context-free, CoWP($V_{(G, \theta)}$) is context-free, and $V_{(G, \theta)}$ is co$\mathcal{CF}$. 

\end{proof}

\begin{remark}
We attempted a similar method of proof with the group generated by Thompson's group V and the Grigorchuk group G, $R=\langle V,G \rangle$. Like $V_{(G,\theta)}$, elements of $R$ can be thought of as Thompson's group $V$ elements with Grigorchuk group elements, $g$ attached to the leaves. However, the Grigorchuk group elements continue to act on the tree whereas the group elements in $V_{(G,\theta)}$ just sit at the end of the branches. This became a problem because it is impossible to complete the calculation of the action of $g$ on any finite test string loaded onto an automaton. We also ran into problems because the test partitions argument used in Theorem \ref{main}. and for Finite Similarity Structure Groups \cite{Hughes1} fails.
\end{remark}

\subsection*{Acknowledgments}
We would like to first thank our research advisor, Dr. Dan Farley, for aiding us in our research endeavors and providing us with the knowledge needed for us to be successful. 
We thank SUMSRI for giving us the opportunity to participate in undergraduate research in mathematics, and we thank the faculty and staff for making this program possible. In addition, we would like to express our gratitude to Miami University for providing funding, housing, and a place to do research. Finally, we thank the National Science Foundation for funding SUMSRI and making our research possible.

\bibliographystyle{plain}
\bibliography{AlgebraPaper}

\end{document}